\newtheorem{theorem}{Theorem}[section]
\newtheorem{lemma}[theorem]{Lemma} 
\newtheorem{proposition}[theorem]{Proposition} 
\newtheorem{thmletter}{Theorem}
\newcommand{\p}[1]{\noindent {\newline\bf #1.}}
\newcommand{\out}{\operatorname{Out}}
\newcommand{\aut}{\operatorname{Aut}}
\newcommand{\inn}{\operatorname{Inn}}
\newcommand{\<}{\langle}
\renewcommand{\>}{\rangle}
\newcommand{\paper}{paper}
\begin{document}


\title[Outer automorphism groups of residually finite groups]
{On the outer automorphism groups of finitely generated, residually finite groups}

\author
{Alan D.\ Logan}

\address{School of Mathematics and Statistics\\
University Gardens \\ University of Glasgow \\ G12 8QW \\
Scotland.}
\email{alan.logan@glasgow.ac.uk}

\date{\today}

\keywords{Outer automorphism groups, Residual finiteness, Mapping tori}

\subjclass[2000]{20F28, 20E99}

\maketitle

\begin{abstract}
Bumagin--Wise posed the question of whether every countable group can be realised as the outer automorphism group of a finitely generated, residually finite group. We give a partial answer to this problem for recursively presentable groups.
\end{abstract}



\section{Introduction}
\label{introduction}
Every group can be realised as the outer automorphism group of some group \cite{matumoto1989any}. One can ask what restrictions can be placed on the groups involved. Notably, Bumagin--Wise proved that every countable group $Q$ can be realised as the outer automorphism group of a finitely generated group $G_Q$ \cite{BumaginWise2005}. Several other authors have achieved results in a similar vein (see, for example, \cite{kojima1988isometry}, \cite{gobel2000outer}, \cite{droste2001all}, \cite{braun2003outer}, \cite{frigerio2005countable}, \cite{minasyan2009groups}).

To prove their result, Bumagin--Wise construct $G_Q$ as the kernel of a short exact sequence using a version of a  Rips' construction \cite{rips1982subgroups}.
Their proof also shows that if $Q$ is finitely presented then $G_Q$ can be taken to be residually finite. They then pose the question: can every countable group $Q$ be realised as the outer automorphism group of a finitely generated, residually finite group $G_Q$?

In this paper we give a partial answer to this question of Bumagin--Wise. Our proof is based upon the construction of Bumagin--Wise and utilises an embedding of Sapir \cite{sapir2014higman}.

\begin{thmletter}
\label{thm:intro1}
If $Q$ is a finitely generated, recursively presented group then either $Q$ or $Q\times C_2$ can be realised as the outer automorphism group of a finitely generated, residually finite group $G_Q$.
\end{thmletter}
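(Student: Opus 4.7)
The plan is to adapt the Bumagin--Wise construction to accommodate groups $Q$ that are only recursively (rather than finitely) presented, by interposing Sapir's Higman-type embedding into finitely presented residually finite groups.

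First I would review the Bumagin--Wise strategy for finitely presented $Q$: applying a residually finite Rips construction (for example Wise's) produces a short exact sequence $1 \to N \to G \to Q \to 1$ in which $G$ is a finitely generated residually finite hyperbolic group, $N$ is a two-generated normal subgroup that is rigid enough to be characteristic in $G$, and the natural map $Q \to \out(G)$ is an isomorphism up to a possible $C_2$ factor arising from an order-two symmetry swapping the two generators of $N$. The obstruction to running this argument directly when $Q$ has only a recursive presentation is that the Rips construction takes a finite list of defining relations as input.

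My approach to remove this obstruction would be to use Sapir's embedding to repackage the infinite recursive presentation of $Q$ inside a finitely presented residually finite overgroup. Concretely, starting from a recursive presentation of $Q$, I would build an auxiliary finitely generated, recursively presented residually finite group $H$ that encodes both a Rips-type kernel $N$ and the defining relations of $Q$ (the mapping torus mentioned in the keywords is the natural tool for converting the recursively enumerated relations into a single automorphism-equivariant structure); Sapir's theorem then embeds $H$ into a finitely presented residually finite group $K$. Taking $G$ to be the appropriate preimage inside $K$ yields the desired sequence $1 \to N \to G \to Q \to 1$ with $G$ finitely generated (cut out of the finitely presented $K$ by the relevant finite data) and residually finite (as a subgroup of $K$).

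The main obstacle will be to preserve, in this embedded setting, enough of the Bumagin--Wise rigidity to identify $\out(G)$ precisely. One has to verify two things: that every automorphism of $G$ restricts to an automorphism (not just an injective endomorphism) of $N$, and that the induced homomorphism $\out(G) \to \out(N)$ has image controlled by the quotient $Q$ rather than inflated by extra outer automorphisms inherited from the ambient finitely presented group $K$. The possible $C_2$ factor in the conclusion reflects the generator-swap symmetry of $N$: it contributes an outer automorphism of $G$ exactly when it cannot be realised by conjugation inside $G$ itself, and one cannot in general arrange that it is, which is why the conclusion is phrased as ``$Q$ or $Q \times C_2$.''
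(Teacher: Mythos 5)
There is a genuine gap, and it sits at the load-bearing step of your plan. Sapir's embedding theorem produces a finitely \emph{presented} overgroup of a finitely generated, recursively presented group (preserving the centraliser of a chosen element); it does \emph{not} produce a finitely presented \emph{residually finite} overgroup, so your claim that $G$ is residually finite ``as a subgroup of $K$'' has no source. Worse, no such embedding can exist in general: a finitely presented residually finite group has solvable word problem, hence so does every finitely generated subgroup, while a finitely generated, recursively presented group (and in particular your auxiliary group $H$ encoding a Rips kernel over $Q$) need not. So the route ``Higman-embed the extension, then take a preimage'' cannot deliver residual finiteness. In addition, the part you defer as ``the main obstacle'' --- showing that $\out(G)$ is $Q$ or $Q\times C_2$ rather than something inflated by the ambient group --- is exactly where the substance lies, and your sketch offers no mechanism for it; nor does the $C_2$ in the statement come from a generator-swap symmetry of the Rips kernel.

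The paper's actual argument applies Sapir's embedding at the level of the \emph{quotient}, not the extension: embed $Q_2=Q\times C_2$ into a finitely presented group $P$ so that $C_P(k)=Q_2$, where $k$ generates the $C_2$ factor. Since $P$ is finitely presented, Bumagin--Wise already gives a finitely generated, residually finite $H$ (with trivial centre and no maps onto $\mathbb{Z}$) such that $\out(H)\cong P$; residual finiteness thus comes from Bumagin--Wise and then from Mal'cev's theorem on split extensions, never from Sapir. One then ``grabs'' $C_P(k)$ by forming the mapping torus $H_\phi=H\rtimes_\phi\mathbb{Z}$ with $\phi$ representing $k$: the technical theorem gives $\out^0(H_\phi)\cong C_{\out(H)}(\widehat{\phi})/\langle\widehat{\phi}\rangle\cong Q$, and since $k=k^{-1}$ the subgroup $\out^0(H_\phi)$ has index two, with the extra coset generated by $h\mapsto h$, $t\mapsto t^{-1}$, yielding $\out(H_\phi)\cong Q\times C_2$ (and $\cong Q$ exactly when one can instead use a factor $C$ of order $>2$ with $k$ not conjugate to $k^{-1}$). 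Your proposal would need to be restructured along these lines --- centraliser control in $\out(H)$ plus the mapping-torus computation --- rather than attempting to push the Rips construction through a Higman-type embedding.
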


This theorem admits a possible improvement: Osin asked if every finitely generated, recursively presentable group can be embedded as a malnormal subgroup of a finitely presented group, and a positive answer to this question would allow us to dispense with the $Q\times C_2$ possibility. Sapir has recently stated that his embedding yields such a positive solution.

\begin{thmletter}
\label{thm:intro2}
Suppose every finitely generated, recursively presented group $Q$ can be embedded as a malnormal subgroup of a finitely presented group $H_Q$. Then every finitely generated, recursively presented group $Q$ can be realised as the outer automorphism group of a finitely generated, residually finite group $G_Q$.
\end{thmletter}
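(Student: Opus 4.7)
The plan is to mirror the proof of Theorem \ref{thm:intro1}, using the malnormality hypothesis precisely at the step that, in the general case, forces one to introduce the possible $C_2$ factor. First, I invoke the hypothesis to embed the finitely generated recursively presented group $Q$ as a malnormal subgroup of a finitely presented group $H_Q$. Next, since $H_Q$ is finitely presented, I apply the residually finite Bumagin--Wise construction (as refined via Sapir's embedding into a suitable finitely presented residually finite overgroup) to $H_Q$ to obtain a short exact sequence
\[
1 \to N \to G \to H_Q \to 1
\]
with $G$ finitely generated and residually finite, and $N$ a Rips-type subgroup whose internal rigidity makes it characteristic in any intermediate overgroup we consider.

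Let $\pi : G \to H_Q$ denote the quotient map, and set $\tilde{G} := \pi^{-1}(Q) \leq G$. Then $\tilde{G}$ is finitely generated (as $N$ and $Q$ are), residually finite as a subgroup of $G$, and fits into $1 \to N \to \tilde{G} \to Q \to 1$. The goal is then to show $\out(\tilde{G}) \cong Q$. Because $N$ is characteristic in $\tilde{G}$, each $\phi \in \aut(\tilde{G})$ descends to some $\bar\phi \in \aut(Q)$ and restricts to $\phi|_N \in \aut(N)$. The Rips-style control on $\aut(N)$ forces $\phi$, modulo $\inn(\tilde{G})$, to act on $Q$ by conjugation by some element $h \in H_Q$ with $h Q h^{-1} = Q$. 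The malnormality of $Q$ in $H_Q$ then forces $h \in Q$, so $\bar\phi$ is inner. A short computation comparing $\inn(\tilde G)$ to the preimage in $\aut(\tilde G)$ of $\inn(Q)$ then yields $\out(\tilde G) \cong Q$.

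The main obstacle I expect is the automorphism bookkeeping in this last step. Unlike the standard Bumagin--Wise situation, $\tilde{G}$ is not itself the output of a Rips construction but a preimage in $G$, so I need to verify that the characteristic and rigidity properties of $N$ still hold inside $\tilde{G}$ (not merely inside $G$) and that no automorphism of $\tilde{G}$ arises that fails to lift along $\pi$. In Theorem \ref{thm:intro1}, it is precisely the failure of this control that can produce an extra involution: without malnormality, the normalizer $N_{H_Q}(Q)$ may properly contain $Q$, and a nontrivial coset representative can induce an involutive outer automorphism accounting for the $C_2$ factor. The malnormality hypothesis collapses this normalizer to $Q$ itself, removing that contribution, and the remaining verification is parallel to the argument already carried out for Theorem \ref{thm:intro1}.
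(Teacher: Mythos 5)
Your route is genuinely different from the paper's, and as it stands it does not work. The two load-bearing claims --- that the Rips kernel $N$ is characteristic in $\widetilde{G}=\pi^{-1}(Q)$, and that ``Rips-style control'' forces every automorphism of $\widetilde{G}$ to be induced by conjugation in the ambient group $G$ --- are never established, and they are exactly the content that would need proof, since Bumagin--Wise's rigidity statement is about the kernel $N$ itself, not about intermediate preimages. Worse, if you grant these claims in the form Bumagin--Wise actually provide them (every automorphism of $N$ is conjugation by an element of $G$, and $C_G(N)=1$), the computation gives the opposite of what you want: for $\psi\in\aut(\widetilde{G})$ with $\psi|_N=\gamma_g|_N$, $g\in G$, the identity $\gamma_{\psi(x)}|_N=\psi\gamma_x\psi^{-1}|_N$ together with $C_G(N)=1$ forces $\psi(x)=gxg^{-1}$ for every $x\in\widetilde{G}$; then $g$ normalises $\widetilde{G}$, so $\pi(g)$ normalises $Q$, and malnormality of $Q$ in $H_Q$ gives $\pi(g)\in Q$, hence $g\in\widetilde{G}$ and $\psi$ is inner. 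So your construction yields $\out(\widetilde{G})=1$, not $Q$: used where you use it, malnormality kills precisely the outer automorphisms you need to survive, and the closing ``short computation comparing $\inn(\widetilde{G})$ to the preimage of $\inn(Q)$'' has nothing left from which to produce a copy of $Q$.

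The paper uses malnormality in a different place: not to collapse a normaliser, but to control a centraliser. It embeds $Q\times C_3=Q\times\langle k\rangle$ malnormally into a finitely presented group $P$; malnormality plus the centrality of $k$ in $Q\times C_3$ give $C_P(k)=Q\times C_3$ and show $k$ is not conjugate to $k^{-1}$ in $P$. Bumagin--Wise then supplies a finitely generated, residually finite $H$ with $\out(H)\cong P$, trivial centre and no epimorphisms onto $\mathbb{Z}$, and one forms the mapping torus $G=H\rtimes_{\phi}\mathbb{Z}$ with $\widehat{\phi}$ corresponding to $k$. Theorem~\ref{theorem:secondmainthm} gives $\out^0(G)\cong C_{\out(H)}(\widehat{\phi})/\langle\widehat{\phi}\rangle\cong(Q\times C_3)/\langle k\rangle\cong Q$, and Lemma~\ref{lem:index1or2} together with the non-conjugacy of $k$ and $k^{-1}$ gives $\out(G)=\out^0(G)$; this is Theorem~\ref{theorem:BWanswer2}, from which Theorem~\ref{thm:intro2} follows immediately. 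Incidentally, your explanation of the $C_2$ in Theorem~\ref{thm:intro1} is also off: it arises because when $k$ has order two $\widehat{\phi}$ is conjugate (indeed equal) to $\widehat{\phi}^{-1}$, so the $t\mapsto t^{-1}$ automorphism of the mapping torus contributes an index-two extension of $\out^0$, not because a normaliser of $Q$ in $H_Q$ exceeds $Q$.
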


\p{Outline of the \paper} In Section~\ref{sec:OutOfMT} we prove a technical theorem, Theorem~\ref{theorem:secondmainthm}, which classifies the outer automorphism group of mapping tori $H_{\phi}=H\rtimes_{\phi}\mathbb{Z}$ where $H$ has trivial centre and has no epimorphisms onto $\mathbb{Z}$. In Section~\ref{sec:bumaginwise} we use this technical result to obtain a way of ``grabbing'' a finitely generated subgroup of $\out(H)$, which is applied to prove our main theorems, Theorems~\ref{thm:intro1}~and~\ref{thm:intro2}. 

\p{Acknowledgements} The author would like to thank his PhD supervisor, Stephen J. Pride, and Tara Brendle for many helpful discussions about this \paper. He would also like to thank Mark Sapir and Ian Agol for pointing out on MathOverflow the embedding used in Section~\ref{sec:bumaginwise}, Henry Wilton for informing the author of the existence of finitely generated, residually finite groups which are not recursively presentable (such groups are used in the proof of Proposition~\ref{prop:fgrf}), and Ashot Minasyan for his helpful discussions on recursive presentability and for allowing the author to include the proof of Proposition~\ref{prop:Minasyan}. The author was supported by an EPSRC doctoral training grant.


\section{The outer automorphism groups of mapping tori}
\label{sec:OutOfMT}
For $\phi\in\aut(H)$ an automorphism of $H$ we shall write $H_{\phi}=H\rtimes_{\phi}\mathbb{Z}$ for the mapping torus $\<H, t; tht^{-1}=\phi(h), \:h\in H\>$ associated to $\phi$. In this section we prove Theorem~\ref{theorem:secondmainthm}, which is the main technical result of this paper. For $H$ a group with no epimorphisms onto $\mathbb{Z}$ and with trivial center, this theorem gives a description of the outer automorphism group $\out(H_{\phi})$ of a mapping torus $H_{\phi}$. Theorem~\ref{theorem:secondmainthm} forms the basis of the proof of Theorems~\ref{thm:intro1}~and~\ref{thm:intro2}, which are the main theorems of this paper.

The layout of this current section is as follows. We begin by proving, in Lemma~\ref{lem:PettetnonontoZ}, that, because $H$ has no epimorphisms onto $\mathbb{Z}$, every automorphism of the mapping torus $H_{\phi}$ fixes the subgroup $H$. We then use this to prove, in Lemma~\ref{lem:principle}, that the representatives for elements of $\out(H_{\phi})$ can be taken to have a specific form. In Section~\ref{sec:mappingtori} we use the representatives given by Lemma~\ref{lem:principle} to prove Theorem~\ref{theorem:secondmainthm}.


\subsection{Automorphisms fix the base group}
Consider a mapping torus $H_{\phi}=H\rtimes_{\phi}\mathbb{Z}$ such that $H$ has no epimorphisms onto $\mathbb{Z}$. The following lemma appears in a paper of Arzhantseva--Lafont--Minasyan \cite{arzhantseva2014isomorphism}, although it is somewhat hidden in the proof of their Proposition~$2.1$.

\begin{lemma}[Arzhantseva--Lafont--Minasyan]
\label{lem:PettetnonontoZ}
Suppose $H$ has no epimorphisms onto $\mathbb{Z}$. Then every automorphism of a mapping torus $H_{\phi}=H\rtimes_{\phi}\mathbb{Z}$ maps $H$ to itself.
\end{lemma}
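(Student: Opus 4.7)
The plan is to exploit the characterization of $H$ as the kernel of the canonical projection $\pi\colon H_{\phi}\to\mathbb{Z}$ sending $H$ to $0$ and $t$ to $1$. Given $\psi\in\aut(H_{\phi})$, I will compose $\pi$ with $\psi$ and show that the resulting map $\pi\circ\psi$ also has $H$ in its kernel; a short index calculation will then force $\psi^{-1}(H)=H$.

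First, I would observe that the hypothesis ``$H$ has no epimorphism onto $\mathbb{Z}$'' is equivalent to $\mathrm{Hom}(H,\mathbb{Z})=0$, because every nontrivial homomorphism $H\to\mathbb{Z}$ has image $n\mathbb{Z}\cong\mathbb{Z}$ for some $n\geq 1$ and hence is an epimorphism onto a copy of $\mathbb{Z}$. Consequently, for the homomorphism $f=\pi\circ\psi\colon H_{\phi}\to\mathbb{Z}$, the restriction $f|_H$ must be trivial, so $H\subseteq\ker f=\psi^{-1}(H)$.

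Next, I would analyse the tower $H\trianglelefteq\psi^{-1}(H)\trianglelefteq H_{\phi}$. Since $H_{\phi}/H\cong\mathbb{Z}$, the intermediate quotient $\psi^{-1}(H)/H$ is a subgroup of $\mathbb{Z}$, hence equal to $n\mathbb{Z}$ for some $n\geq 0$. The third isomorphism theorem then gives $H_{\phi}/\psi^{-1}(H)\cong \mathbb{Z}/n\mathbb{Z}$. On the other hand, since $\psi$ is an automorphism, it induces an isomorphism $H_{\phi}/\psi^{-1}(H)\cong H_{\phi}/H\cong\mathbb{Z}$. Comparing, we must have $n=0$, so $\psi^{-1}(H)=H$, and because $\psi$ is a bijection this is equivalent to $\psi(H)=H$.

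I do not anticipate a genuine obstacle here; the crucial observation is simply that the no-epimorphism hypothesis forces every homomorphism $H_{\phi}\to\mathbb{Z}$ to factor through $H_{\phi}/H$, so that the subgroup $H$ is detected purely by the abelian quotient structure and is therefore preserved by any automorphism. The only care needed is in the index bookkeeping to rule out proper intermediate subgroups $\psi^{-1}(H)$ strictly containing $H$.
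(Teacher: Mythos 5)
Your proposal is correct, and the crucial observation — that the hypothesis forces $\pi\circ\psi$ to vanish on $H$, hence $\psi(H)\leq H$ (equivalently $H\leq\psi^{-1}(H)$) — is exactly the one the paper uses. Where you diverge is in the closing step: the paper simply runs the same composition argument with $\psi^{-1}$ in place of $\psi$, obtaining $\psi^{-1}(H)\leq H$ and hence $\psi(H)=H$ by symmetry, whereas you instead examine the tower $H\trianglelefteq\psi^{-1}(H)\trianglelefteq H_{\phi}$ and apply the third isomorphism theorem to show $H_{\phi}/\psi^{-1}(H)$ is simultaneously a quotient $\mathbb{Z}/n\mathbb{Z}$ of $\mathbb{Z}$ and isomorphic to $\mathbb{Z}$, forcing $n=0$. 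Both arguments are valid; the paper's is shorter and avoids any index bookkeeping, while yours isolates the structural fact that a normal subgroup sandwiched between $H$ and $H_{\phi}$ with infinite cyclic quotient must equal $H$, which is a slightly more general observation. One small point worth stating explicitly in your version: $\psi^{-1}(H)$ is normal in $H_{\phi}$ (being the preimage of a normal subgroup under an automorphism), which you need before invoking the correspondence and third isomorphism theorems.
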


\begin{proof}
Consider the following composition of maps, where the first embedding is the natural one of $H$ into $H_{\phi}$, where the map $\psi:H_{\phi}\rightarrow H_{\phi}$ is an automorphism of $H_{\phi}$, and where the final surjection is the natural one of $H_{\phi}$ onto $\mathbb{Z}$ by quotienting out $H$.
\[
H\hookrightarrow H_{\phi} \xrightarrow{\psi} H_{\phi}\twoheadrightarrow \mathbb{Z}
\]
As $H$ does not map onto $\mathbb{Z}$, these maps compose to give the trivial map. Therefore, $H\psi\leq H$. Using the same argument with $\psi^{-1}$, we see that $H\psi^{-1}\leq H$ and so $H\psi=H$ as required.
\end{proof}

Note that for $H$ an arbitrary group, the automorphisms of a mapping torus $H_{\phi}=H\rtimes_{\phi}\mathbb{Z}$ which fix $H$ form a subgroup $\aut_H(H_{\phi})$ of $\aut(H_{\phi})$, and this subgroup contains all the inner automorphisms so there is an analogous subgroup $\out_H(H_{\phi})$ of $\out(H_{\phi})$. The work in the remainder of Section~\ref{sec:OutOfMT} can be viewed as studying this subgroup $\out_H(H_{\phi})$ of $\out(H_{\phi})$. Lemma~\ref{lem:PettetnonontoZ} proves that $\out_H(H_{\phi})=\out(H_{\phi})$ in our particular case.


\subsection{The form of (outer) automorphisms}\label{subsection31}
Consider a mapping torus $H_{\phi}=H\rtimes_{\phi}\mathbb{Z}=\<H, t; tht^{-1}=\phi(h), \:h\in H\>$ such that $H$ has no epimorphisms onto $\mathbb{Z}$ and $H$ has trivial center. Our main technical theorem, Theorem~\ref{theorem:secondmainthm}, follows from a classification of the elements of $\out(H_{\phi})$, that is, to prove the main technical theorem we begin by finding representative automorphisms for elements of $\out(H_{\phi})$. The purpose of this current section, Section~\ref{subsection31}, is to prove Lemma~\ref{lem:principle}, which gives this classification.

We begin by proving that certain maps, which are used as representatives for elements of $\out(H_{\phi})$ in Lemma~\ref{lem:principle}, define automorphisms of $H_{\phi}$. There are two forms these representatives take, and Lemma~\ref{lem:alphaform} considers the first form while Lemma~\ref{lem:zetaform} considers the second form.

We shall write $[h,k]=h^{-1}k^{-1}hk$, and by $\delta\phi(h)$ we mean $\phi(\delta(h))$. We shall write multiplication in $G$ as $g h$ and as $g\cdot h$, with the latter notation being used to ease any ambiguity occurring when considering the images of elements under automorphisms, for example we would write $g\cdot\phi\psi(h)\cdot k$. For $K$ some group with automorphism $\psi\in\aut(K)$, we shall write $\widehat{\psi}$ for the element of $\out(K)$ with representative $\psi$. We shall write $C_K(g)$ to denote the centraliser of the element $g\in K$.

\begin{lemma}\label{lem:alphaform}
Let $H_{\phi}$ be a mapping torus of $H$. If ${\delta}\in \operatorname{Aut}(H)$ is such that $\widehat{\delta}\in C_{\out(H)}(\widehat{\phi})$ then $\delta$ induces an automorphism of $H_{\phi}$ in the following way, where $g$ is such that $[{\delta},\phi]=\gamma_g$.
\begin{align*}\alpha_{\delta}:
h&\mapsto {\delta}(h)&\forall h\in H\\
t&\mapsto gt \end{align*}
\end{lemma}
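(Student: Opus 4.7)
The plan is to check that the prescribed assignment respects the defining relations of $H_{\phi}$ (giving a well-defined endomorphism), and then exhibit an explicit two-sided inverse of the same shape.

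First I would verify that $\alpha_{\delta}$ respects the relations. The group $H_{\phi}$ has presentation $\<H, t; tht^{-1}\phi(h)^{-1}, h\in H\>$. Since $\delta$ is an automorphism of $H$, the relations internal to $H$ are automatically preserved, so the only thing to check is that, for every $h\in H$,
\[
\alpha_{\delta}(t)\cdot\alpha_{\delta}(h)\cdot\alpha_{\delta}(t)^{-1} = \alpha_{\delta}(\phi(h)),
\]
that is, $gt\cdot\delta(h)\cdot t^{-1}g^{-1} = \delta\phi(h)$. Using the relation $t\delta(h)t^{-1}=\phi\delta(h)\;(=\delta(\phi(h))$ in the paper's notation$)$ inside $H_{\phi}$, the left hand side collapses to $g\cdot\phi\delta(h)\cdot g^{-1} = \gamma_{g}(\phi\delta(h))$. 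The equality we need is therefore exactly $\gamma_{g}\cdot\phi\delta = \delta\phi$ as endomorphisms of $H$, which rearranges to the hypothesis $[\delta,\phi]=\gamma_{g}$. Hence the assignment on generators extends to a homomorphism $\alpha_{\delta}\colon H_{\phi}\to H_{\phi}$.

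Next I would produce an inverse. The point is that if $\widehat{\delta}$ commutes with $\widehat{\phi}$ in $\out(H)$ then so does $\widehat{\delta^{-1}}$, so there exists $g'\in H$ with $[\delta^{-1},\phi]=\gamma_{g'}$; the above paragraph then shows $\alpha_{\delta^{-1}}$ is a well-defined endomorphism of $H_{\phi}$ for any such $g'$. A direct computation on generators shows that choosing $g'=\delta^{-1}(g^{-1})$ (or, more invariantly, the unique such element modulo $Z(H)=1$) realises $\alpha_{\delta^{-1}}$ as a two-sided inverse of $\alpha_{\delta}$: indeed,
\[
\alpha_{\delta^{-1}}\alpha_{\delta}(t) = \alpha_{\delta^{-1}}(gt) = \delta^{-1}(g)\cdot\delta^{-1}(g^{-1})t = t,
\]
and similarly on $h\in H$ and in the opposite composition, after checking the analogous identity $\alpha_{\delta}\alpha_{\delta^{-1}}(t)=t$.

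The main obstacle I expect is bookkeeping: the commutator identity $[\delta,\phi]=\gamma_{g}$ must be used in exactly the composition convention the paper fixes (namely $\delta\phi(h)=\phi(\delta(h))$ and $[x,y]=x^{-1}y^{-1}xy$), and the element $g$ in the inverse map $\alpha_{\delta^{-1}}$ has to be matched to $g$ so that the two compositions collapse on the generator $t$. Once these conventions are pinned down, both the verification of the relation and the verification that $\alpha_{\delta}\alpha_{\delta^{-1}}=\alpha_{\delta^{-1}}\alpha_{\delta}=\mathrm{id}_{H_{\phi}}$ are short algebraic manipulations. Trivial centre is used implicitly to pin down $g$ uniquely, but existence of some such $g$ is all the argument actually needs.
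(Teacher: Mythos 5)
Your proposal is correct and takes essentially the same route as the paper's proof: verify the relations $tht^{-1}=\phi(h)$ using the identity $g\cdot\phi(\delta(h))\cdot g^{-1}=\delta(\phi(h))$ encoded by $[\delta,\phi]=\gamma_g$, then invert $\alpha_{\delta}$ by the map $\alpha_{\delta^{-1}}$ built from the element $\delta^{-1}(g^{-1})$ (the paper checks surjectivity plus a right inverse rather than a two-sided inverse, an immaterial difference). Your closing observation is also accurate: only the existence of some $g$ with $[\delta,\phi]=\gamma_g$ is needed, and triviality of the centre plays no role in this lemma.
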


\begin{proof}
%
To see that $\alpha_{\delta}$ is a homomorphism note that it satisfies all the relators of $H$, as $\alpha_{\delta}|_H\in\operatorname{Aut}(H)$, so it is sufficient to prove that $\alpha_{\delta}(th)=\alpha_{\delta}(\phi(h)\cdot t)$ for all $h\in H$. So, the left hand side is as follows.
\begin{align*}
\alpha_{\delta}(th)
&=gt\cdot \delta(h)\\
&=g\cdot\delta\phi(h)\cdot t&\forall h\in H
\end{align*}
We now evaluate the right hand side as follows. Note that (\ref{eqn:checkAlphaHom2}), below, is obtained because $g$ is such that $\delta\phi\gamma_g^{-1}=\phi\delta$.
\begin{align}
\alpha_{\delta}(\phi(h)\cdot t)
&=\phi\delta(h)\cdot gt\notag\\
&=\delta\phi\gamma_g^{-1}(h)\cdot gt\label{eqn:checkAlphaHom2}\\
&=g\cdot\delta\phi(h)\cdot t&\forall h\in H\notag
\end{align}
The left and right hand sides are equal, so $\alpha_{\delta}$ is a homomorphism.

To see that $\alpha_{\delta}$ is surjective, note that its restriction to $H$ is surjective, and further note that $t\mapsto gt$ for some $g\in H$ so $t$ is in the image.

To see that $\alpha_{\delta}$ is right-invertible, and so injective, we note that $\alpha_{\delta^{-1}}$ is also a homomorphism and then prove that $\alpha_{\delta}\alpha_{\delta^{-1}}$ is trivial. So, $\alpha_{\delta^{-1}}$ is a homomorphism as $\widehat{\delta^{-1}}\in C_{\out(H)}(\widehat{\phi})$ because $\widehat{\delta}\in C_{\out(H)}(\widehat{\phi})$. Now, because $[\delta, \phi]=\gamma_g$ we have that $[\delta^{-1}, \phi]=\gamma_{\delta^{-1}(g^{-1})}$, which means that $\alpha_{\delta^{-1}}(t)=\delta^{-1}(g^{-1})\cdot t$. Then, $\alpha_{\delta^{-1}}$ is the right inverse of $\alpha_{\delta}$ as clearly $\alpha_{\delta}\alpha_{\delta^{-1}}$ fixes $h$ for all $h\in H$ while we have the following.
\begin{align*}
\alpha_{\delta}\alpha_{\delta^{-1}}(t)
&=\alpha_{\delta^{-1}}(gt)\\
&=\delta^{-1}(g)\cdot \delta^{-1}(g^{-1})\cdot t\\
&=t
\end{align*}
Therefore, $\alpha_{\delta}$ is injective. The proof of the lemma is complete.
\end{proof}

The second form which automorphisms can take is given by Lemma~\ref{lem:zetaform}.

\begin{lemma}\label{lem:zetaform}
Let $H_{\phi}$ be a mapping torus of $H$. If ${\delta}\in \operatorname{Aut}(H)$ is such that $\widehat{\delta}^{-1}\widehat{\phi}\widehat{\delta}=\widehat{\phi}^{-1}$ then $\delta$ induces an automorphism of $H_{\phi}$ in the following way, where $g$ is such that $\delta^{-1}\phi\delta=\phi^{-1}\gamma_g$.
\begin{align*}\zeta_{\delta}:
h&\mapsto \delta(h)&\forall h\in H\\
t&\mapsto g^{-1}t^{-1} \end{align*}
\end{lemma}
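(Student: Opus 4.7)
The plan is to mirror the three-part proof of Lemma~\ref{lem:alphaform}: first verify that $\zeta_{\delta}$ respects the defining relations of $H_{\phi}$, then note that it is surjective, and finally produce a right inverse to get injectivity.

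For the homomorphism step, since $\zeta_{\delta}|_{H}=\delta\in\aut(H)$ automatically satisfies the relators of $H$, the only thing to check is that $\zeta_{\delta}(th)=\zeta_{\delta}(\phi(h)\cdot t)$ for every $h\in H$. I would compute each side in $H_{\phi}$, using the identity $t^{-1}k=\phi^{-1}(k)\cdot t^{-1}$ to push $t^{-1}$ past elements of $H$ and rewrite both sides in the normal form ``element of $H$ times $t^{-1}$''. Equality of the two normal forms reduces to an identity in $H$, which is precisely the content of the hypothesis $\delta^{-1}\phi\delta=\phi^{-1}\gamma_{g}$ applied to $\phi(h)$ and rearranged as $\phi^{-1}\delta(h)=g\cdot\delta\phi(h)\cdot g^{-1}$.

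Surjectivity is immediate: $\zeta_{\delta}$ restricts to the bijection $\delta$ of $H$, and $\zeta_{\delta}(t)^{-1}=tg$ lies in the image once $g\in H$ does, so $t$ is in the image as well. For injectivity I would construct a right inverse. The hypothesis is symmetric under $\delta\mapsto\delta^{-1}$: inverting both sides of $\widehat{\delta}^{-1}\widehat{\phi}\widehat{\delta}=\widehat{\phi}^{-1}$ and conjugating yields $\widehat{\delta}\widehat{\phi}\widehat{\delta}^{-1}=\widehat{\phi}^{-1}$. Thus the previous two steps produce a homomorphism $\zeta_{\delta^{-1}}$ attached to some element $g'\in H$ satisfying $\delta\phi\delta^{-1}=\phi^{-1}\gamma_{g'}$. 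The composite $\zeta_{\delta}\zeta_{\delta^{-1}}$ is then the identity on $H$ by construction, and pinning down the explicit relationship between $g$ and $g'$ will force $\zeta_{\delta}\zeta_{\delta^{-1}}(t)=t$, in close analogy with the closing paragraph of Lemma~\ref{lem:alphaform}.

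The main obstacle I anticipate is the bookkeeping in the homomorphism step: translating $\delta^{-1}\phi\delta=\phi^{-1}\gamma_{g}$ into the precise conjugation identity needed to equate the two normal forms requires care with the convention for $\gamma_{g}$ and with the inverses introduced by pushing $t^{-1}$ past $H$ rather than pushing $t$. Once this identity is in hand, the surjectivity step is a one-liner and the right-inverse step is a direct transcription of the corresponding part of Lemma~\ref{lem:alphaform}.
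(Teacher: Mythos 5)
Your proposal is correct, and its first two steps coincide with the paper's: the relator check reduces to the identity $\phi^{-1}(\delta(h))=g\cdot\delta(\phi(h))\cdot g^{-1}$ (which is the pointwise form of $\delta^{-1}\phi\delta=\phi^{-1}\gamma_g$; note the paper writes compositions left-to-right, so its displayed computation looks superficially different), and surjectivity is exactly the one-liner you give. Where you genuinely diverge is injectivity. The paper does not construct $\zeta_{\delta^{-1}}$; instead it squares: it computes $[\delta^2\gamma_g^{-1},\phi]$ and shows it is inner, so Lemma~\ref{lem:alphaform} provides the automorphism $\alpha_{(\delta^2\gamma_g^{-1})}$, and it then verifies $\zeta_{\delta}^2\gamma_{g}^{-1}=\alpha_{(\delta^2\gamma_g^{-1})}$, whence $\zeta_{\delta}$ is injective because this composite is bijective. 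Your route---observing that inverting and conjugating the hypothesis shows $\delta^{-1}$ satisfies the same hypothesis, so the homomorphism step also yields $\zeta_{\delta^{-1}}$, and then composing---is a direct transcription of the closing argument of Lemma~\ref{lem:alphaform} and works equally well. The one point you must make explicit when you ``pin down'' $g'$: an arbitrary $g'$ with $\delta\phi\delta^{-1}=\phi^{-1}\gamma_{g'}$ is only determined up to the centre of $H$ (not assumed trivial in this lemma), and the composite sends $t\mapsto\delta^{-1}(g^{-1})\cdot\phi(g')\cdot t$, which is $t$ only up to a central factor for a bad choice of $g'$. So choose $g'=\phi^{-1}(\delta^{-1}(g))$---a short computation from $\delta^{-1}\phi\delta=\phi^{-1}\gamma_g$ shows this choice is legitimate---and then $\phi(g')=\delta^{-1}(g)$ makes the composite fix $t$, exactly parallel to the paper's choice of $\delta^{-1}(g^{-1})$ for $\alpha_{\delta^{-1}}$ in Lemma~\ref{lem:alphaform}. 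Comparing the two: your argument is slightly more elementary and self-contained, while the paper's squaring trick additionally records the useful interaction $\zeta_{\delta}^2\gamma_g^{-1}=\alpha_{(\delta^2\gamma_g^{-1})}$ between the two families of representatives.
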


\begin{proof}
%
To see that $\zeta_{\delta}$ is a homomorphism note that it satisfies all the relators of $H$, as $\zeta_{\delta}|_H\in\operatorname{Aut}(H)$, so it is sufficient to prove that $\zeta_{\delta}(th)=\zeta_{\delta}(\phi(h)\cdot t)$ for all $h\in H$. So, the left hand side is as follows.
\begin{align*}
\zeta_{\delta}(th)
&=g^{-1}t^{-1}\cdot\delta(h)\\
&=g^{-1}\cdot\delta\phi^{-1}(h)\cdot t^{-1}
\end{align*}
We now evaluate the right hand side as follows. Note that (\ref{eqn:checkZetaHom2}), below, is obtained because $g$ is such that $\delta\phi^{-1}\gamma_g=\phi\delta$.
\begin{align}
\zeta_{\delta}(\phi(h) t)
&=\phi\delta(h) \cdot g^{-1}t^{-1}\notag\\
&=\delta\phi^{-1}\gamma_g(h)\cdot g^{-1}t^{-1}\label{eqn:checkZetaHom2}\\
&=g^{-1}\cdot\delta\phi^{-1}(h)t^{-1}\notag
\end{align}
The left and right hand sides are equal, so $\zeta_{\delta}$ is a homomorphism.

To see that $\zeta_{\delta}$ is surjective, note that its restriction to $H$ is surjective, and further note that $t\mapsto g^{-1}t^{-1}$ for some $g\in H$ so $t$ is in the image.

In order to prove that $\zeta_{\delta}$ is right-invertible, and so injective, we shall prove that $\alpha_{(\delta^2\gamma_{g}^{-1})}$ is an automorphism of $H_{\phi}$ and that $\zeta_{\delta}^2\gamma_{g}^{-1}=\alpha_{(\delta^2\gamma_g^{-1})}$.
We begin by evaluating $[\delta^2\gamma_g^{-1}, \phi]$ as follows, where (\ref{autequalities1}) is obtained because $\delta^{-1}\phi^{-1}\delta=\gamma_g^{-1}\phi$, while $\phi\delta\phi\gamma_{\phi(g)}^{-1}=\delta$ yields (\ref{autequalities2}).
\begin{align}
[\delta^2\gamma_g^{-1}, \phi]
&=\gamma_g\delta^{-2}\phi^{-1}\delta^2\gamma_g^{-1}\phi\notag\\
&=\gamma_g\delta^{-1}(\delta^{-1}\phi^{-1}\delta)\delta\phi\gamma_{\phi(g)}^{-1}\notag\\
&=\gamma_g\delta^{-1}\gamma_g^{-1}(\phi\delta\phi\gamma_{\phi(g)}^{-1})\label{autequalities1}\\
&=\gamma_g\delta^{-1}\gamma_g^{-1}\delta\label{autequalities2}\\
&=\gamma_g\gamma_{\delta(g^{-1})}=\gamma_{g\cdot\delta(g^{-1})}\notag
\end{align}
This implies that $\widehat{\delta^2\gamma_g^{-1}}\in C_{\out(H)}(\widehat{\phi})$, so by Lemma~\ref{lem:alphaform} we have that $\alpha_{(\delta^2\gamma_g^{-1})}\in\aut(H_{\phi})$. Note that it also implies the following.
\[
\alpha_{(\delta^2\gamma_g^{-1})}(t)=g\cdot \delta(g^{-1})\cdot t
\]
Then, to prove that $\zeta_{\delta}^2\gamma_{g}^{-1}=\alpha_{(\delta^2\gamma_g^{-1})}$, note that as their restriction to $H$ is identical and because $\zeta_{\delta}^2$ is a homomorphism, it is sufficient to prove that $\zeta_{\delta}^2\gamma_{g}^{-1}(t)=g\cdot \delta(g^{-1})\cdot t$. We have the following.
\begin{align*}
\zeta_{\delta}^2\gamma_{g}^{-1}(t)
&=\zeta_{\delta}\gamma_{g}^{-1}({g}^{-1}t^{-1})\\
&=\displaystyle\gamma_{g}^{-1}\left(\delta(g^{-1})\cdot tg\right)\\
&=\displaystyle g\cdot \delta(g^{-1})\cdot t
\end{align*}
We conclude that $\zeta_{\delta}^2\gamma_g^{-1}=\alpha_{(\delta^2\gamma_g^{-1})}$, so the lemma holds.
\end{proof}

\p{Classifying the elements of \boldmath{$\out(H_{\phi})$}}
We shall now prove Lemma \ref{lem:principle}, which classifies the coset representatives for $\out(H_{\phi})$. Proving this lemma is the purpose of this current section, Section~\ref{subsection31}.

\begin{lemma}
\label{lem:principle}
Suppose $H_{\phi}=H\rtimes_{\phi}\mathbb{Z}$ is a mapping torus such that $H$ has no epimorphisms onto $\mathbb{Z}$.
Then every element $\widehat{\psi}$ of $\out(H_{\phi})$ has a representative in $\aut(H_{\phi})$ of the form $\alpha_{\delta}$ or of the form $\zeta_{\delta}$. Moreover, every map $\alpha_{\delta}$ and $\zeta_{\delta}$ defines an automorphism of $H_{\phi}$.
\end{lemma}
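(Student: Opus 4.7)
The ``moreover'' assertion is exactly the content of Lemmas~\ref{lem:alphaform} and~\ref{lem:zetaform}, so only the first assertion needs work. Given $\psi\in\aut(H_\phi)$, my aim is to show that $\psi$ itself---and not merely some representative of $\widehat\psi$---already equals either $\alpha_\delta$ or $\zeta_\delta$ for $\delta:=\psi|_H$ and a suitable group element $g\in H$.

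The first step is to apply Lemma~\ref{lem:PettetnonontoZ}, which gives $\psi(H)=H$, so that $\delta:=\psi|_H$ lies in $\aut(H)$. Because $\psi$ descends to an automorphism of the quotient $H_\phi/H\cong\mathbb{Z}$, and the only automorphisms of $\mathbb{Z}$ are $\pm\operatorname{id}$, we must have $\psi(t)=g_1 t^{\epsilon}$ for some $g_1\in H$ and some $\epsilon\in\{+1,-1\}$.

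Applying $\psi$ to the defining relation $tht^{-1}=\phi(h)$ for arbitrary $h\in H$ and simplifying inside $H_\phi$ via $t^{\pm1}\delta(h)t^{\mp1}=\phi^{\pm1}(\delta(h))$ produces the identity
\[
g_1\,\phi^{\epsilon}(\delta(h))\,g_1^{-1}=\delta(\phi(h))\qquad\text{for every }h\in H.
\]
When $\epsilon=+1$, this identity says that $\phi\circ\delta$ and $\delta\circ\phi$ differ by the inner automorphism $\gamma_{g_1}$, hence $\widehat\delta\in C_{\out(H)}(\widehat\phi)$---precisely the hypothesis of Lemma~\ref{lem:alphaform}---and a direct comparison of formulas shows that $g_1$ is a valid choice of the element $g$ there, giving $\psi=\alpha_\delta$. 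When $\epsilon=-1$, the identity says that $\phi^{-1}\circ\delta$ and $\delta\circ\phi$ differ by an inner automorphism, equivalently $\widehat\delta^{-1}\widehat\phi\widehat\delta=\widehat\phi^{-1}$ in $\out(H)$, which is the hypothesis of Lemma~\ref{lem:zetaform}; taking $g:=g_1^{-1}$ then realises $\psi$ as $\zeta_\delta$.

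I expect the only real friction to be notational: one has to keep the commutator convention $[h,k]=h^{-1}k^{-1}hk$, the composition order already used in Section~\ref{subsection31}, and the normalisation of $\gamma_g$ all consistent, so that the $g_1$ extracted from $\psi(t)=g_1 t^{\pm 1}$ genuinely solves the equation (for example $[\delta,\phi]=\gamma_{g_1}$ in the $\epsilon=+1$ case) demanded of $g$ in Lemmas~\ref{lem:alphaform} and~\ref{lem:zetaform}. Once these conventions are pinned down no further ideas beyond those two lemmas and Lemma~\ref{lem:PettetnonontoZ} are required.
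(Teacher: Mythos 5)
Your proposal is correct and follows essentially the same route as the paper: restrict $\psi$ to $H$ via Lemma~\ref{lem:PettetnonontoZ}, write $\psi(t)=g_1t^{\epsilon}$, apply $\psi$ to the defining relation to land exactly in the hypotheses of Lemmas~\ref{lem:alphaform} and~\ref{lem:zetaform} (with $g=g_1$, resp.\ $g=g_1^{-1}$), and note that the ``moreover'' clause is precisely those two lemmas. The only cosmetic difference is that you obtain $\epsilon=\pm1$ from the induced automorphism of the quotient $H_{\phi}/H\cong\mathbb{Z}$, whereas the paper argues via a word in $gt^{i}$ and $H$ representing $t$ --- the same fact, slightly repackaged.
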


\begin{proof}
By Lemma~\ref{lem:alphaform} and Lemma~\ref{lem:zetaform}, each of the prospective representatives $\alpha_{\delta}$ and $\zeta_{\delta}$ define automorphisms of $H_{\phi}$. Therefore, we prove, below, the first part of this lemma, that every element of $\out(H_{\phi})$ has a representative in $\aut(H_{\phi})$ of one of the stipulated forms.

We begin by proving that if $\widehat{\psi}\in\operatorname{Out}(H_{\phi})$ then there is a representative $\psi\in\aut(H_{\phi})$ of the following form, where $g\in H$ and $\delta\in\aut(H)$.
\begin{align}
\psi: h&\mapsto \delta(h)&h\in H \label{align:roughformofauts}\\
t&\mapsto gt^{\epsilon}\notag
\end{align}
To see this, consider a representative $\psi\in\widehat{\psi}$. Note that $\psi(H)=H$ by Lemma~\ref{lem:PettetnonontoZ}, thus the restriction of $\psi$ to $H$ is an automorphism $\delta$ of $H$. Therefore, as $H_{\phi}=H \rtimes_{\phi}\mathbb{Z}$ is a semidirect product, the representative $\psi\in\aut(H_{\phi})$ can be chosen to be such that $\psi(h)=\delta(h)$ for all $h\in H$, and $\psi(t)=gt^{i}$ where $g\in H$ and $\delta\in\aut(H)$. We shall now prove that the number $i$ has absolute value one, $|i|=1$. This completes our proof that a representative $\psi\in\widehat{\psi}$ can be chosen to have the form (\ref{align:roughformofauts}). To see that $|i|=1$, note that, because $\psi$ is an automorphism, there exists a word $W$ over $H$ and $gt^i$ which represents $t$, $W(gt^i, H)=t$. However, as $H_{\phi}$ is a semidirect product this word can be written as $t^{ij}k$ for some $k\in H$, $j\in\mathbb{Z}$. Thus, $t=t^{ij}k$, and so $|i|=1$ as required.

We shall use the form (\ref{align:roughformofauts}) to prove the lemma. We investigate the cases $\epsilon=1$ and $\epsilon=-1$ separately.

Suppose $\epsilon=1$.
It is sufficient to prove that $\delta\phi=\phi\delta\gamma_g$ holds.
We have the following.
\begin{align*}
\psi(th)&=\psi\displaystyle\left(\phi(h)\cdot t\displaystyle\right)&\forall h\in H\\
gt\cdot \delta(h)&=\phi\delta(h)\cdot gt&\forall h\in H\\
g\cdot \delta\phi(h)\cdot t&=\phi\delta(h)\cdot gt&\forall h\in H
\end{align*}
Then, $\delta\phi=\phi\delta\gamma_g$ holds, so $\psi=\alpha_{\delta}$.

Suppose $\epsilon=-1$.
It is sufficient to prove that $\delta^{-1}\phi\delta=\phi^{-1}\gamma_g^{-1}$ holds (note that $g$ has been replaced with $g^{-1}$ in the definition of $\zeta_{\delta}$, as $\psi(t)=gt^{-1}$).
We have the following.
\begin{align*}
\psi(th)&=\psi\displaystyle\left( \phi(h)\cdot t\displaystyle\right)&\forall h\in H\\
gt^{-1}\cdot\delta(h)&=\phi\delta(h)\cdot gt^{-1}&\forall h\in H\\
g\cdot\delta\phi^{-1}(h)\cdot t^{-1}&=\phi\delta(h)\cdot gt^{-1}&\forall h\in H
\end{align*}
Then, $\delta\phi^{-1}\gamma_g^{-1}=\phi\delta$ holds, which yields the required equality, so $\psi=\zeta_{\delta}$. This completes the proof of the lemma.
\end{proof}


\subsection{The subgroup \boldmath{$\out^0(H_{\phi})$}}
\label{sec:SubgroupWeIsolate}
Having proven Lemma \ref{lem:principle}, we know, in a certain sense, what the elements of $\operatorname{Out}(H_{\phi})$ are, where $H_{\phi}=H\rtimes_{\phi}\mathbb{Z}$ is a mapping torus and $H$ does not map onto $\mathbb{Z}$. In Section~\ref{sec:mappingtori}, below, we analyse the group $\out^0(H_{\phi})$ consisting of the elements of the form $\widehat{\alpha}_{\delta}$, where ${\alpha}_{\delta}$ was defined in Lemma~\ref{lem:alphaform}, under the additional assumption that $H$ has trivial center, and this analysis yields Theorem~\ref{theorem:secondmainthm}. Note that the purpose of Section~\ref{sec:OutOfMT} is to prove Theorem~\ref{theorem:secondmainthm}, and this result forms the basis of the proofs of the main theorems, Theorems~\ref{thm:intro1}~and~\ref{thm:intro2}.

We shall now explain why we do not consider the automorphisms $\zeta_{\delta}$, but instead restrict our investigations to the subgroup $\out^0(H_{\phi})$ of $\out(H_{\phi})$. If there does not exist any automorphisms of the form $\zeta_{\delta}$ then $\operatorname{Out}^0(H_{\phi})=\operatorname{Out}(H_{\phi})$. Otherwise, noting that the $\alpha_{\delta}$ maps $t$ to $g_1t$ while $\zeta_{\delta^{\prime}}$ maps $t$ to $g_2^{-1}t^{-1}$ for some $g_1, g_2\in H$, we see that $\out^0(H_{\phi})$ is an index two subgroup of $\out(H_{\phi})$. Therefore, applying Lemma~\ref{lem:zetaform}, which provides conditions for the existence of a map $\zeta_{\delta}$, we have the following lemma.

\begin{lemma}
\label{lem:index1or2}
The subgroup $\out^0(H_{\phi})$ consisting of the outer automorphisms of the form $\widehat{\alpha}_{\delta}$ has index two in $\out(H_{\phi})$ if $\widehat{\phi}$ is conjugate to $\widehat{\phi}^{-1}$ in $\out(H)$. Otherwise, $\out^0(H_{\phi})=\out(H_{\phi})$.
\end{lemma}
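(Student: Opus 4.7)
The plan is to construct a homomorphism $\epsilon\colon \out(H_{\phi}) \to \{\pm 1\}$ whose kernel is exactly $\out^0(H_{\phi})$, and then to observe that $\epsilon$ is surjective precisely when $\widehat{\phi}$ is conjugate to $\widehat{\phi}^{-1}$ in $\out(H)$.

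First I would build $\epsilon$. Since $H$ has no epimorphism onto $\mathbb{Z}$, Lemma~\ref{lem:PettetnonontoZ} gives $\psi(H)=H$ for every $\psi \in \aut(H_{\phi})$; hence $\psi$ descends to an automorphism of the abelian quotient $H_{\phi}/H \cong \mathbb{Z}$, yielding a homomorphism $\aut(H_{\phi}) \to \aut(\mathbb{Z}) = \{\pm 1\}$. Because inner automorphisms act trivially on the abelian quotient $H_{\phi}/H$, this homomorphism factors through $\out(H_{\phi})$; call the resulting map $\epsilon$.

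Next I would evaluate $\epsilon$ on the two families of representatives provided by Lemma~\ref{lem:principle}. An $\alpha_{\delta}$ sends $t \mapsto gt$ and hence satisfies $\epsilon(\widehat{\alpha_{\delta}}) = +1$, while a $\zeta_{\delta}$ sends $t \mapsto g^{-1}t^{-1}$ and satisfies $\epsilon(\widehat{\zeta_{\delta}}) = -1$. Since $\epsilon$ is a well-defined function on $\out(H_{\phi})$, no outer class can simultaneously admit representatives of both types, so $\epsilon^{-1}(+1)$ is exactly $\out^0(H_{\phi})$, which therefore has index $1$ or $2$.

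Finally, the index is $2$ iff some $\zeta_{\delta}$ exists in $\aut(H_{\phi})$. By Lemma~\ref{lem:zetaform}, any $\delta \in \aut(H)$ with $\widehat{\delta}^{-1}\widehat{\phi}\widehat{\delta} = \widehat{\phi}^{-1}$ produces such a $\zeta_{\delta}$; conversely, the $\epsilon=-1$ case of the proof of Lemma~\ref{lem:principle} shows that any automorphism with $\epsilon=-1$ is a $\zeta_{\delta}$, whose restriction $\delta$ to $H$ then necessarily satisfies the conjugation relation in $\out(H)$. This produces the stated dichotomy. The argument is essentially bookkeeping once Lemmas~\ref{lem:principle} and~\ref{lem:zetaform} are in hand; the only point needing care is the well-definedness of $\epsilon$ on outer classes, which relies only on the abelianness of $H_{\phi}/H$.
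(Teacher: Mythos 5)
Your proof is correct and follows essentially the same approach as the paper, which presents the argument informally in the paragraph preceding the lemma statement. Your explicit construction of the sign homomorphism $\epsilon\colon\out(H_{\phi})\to\{\pm 1\}$ via the induced action on the abelian quotient $H_{\phi}/H\cong\mathbb{Z}$ simply formalizes the paper's observation that the $\alpha_{\delta}$ send $t$ to a positive power of $t$ while the $\zeta_{\delta}$ send $t$ to a negative power, so that $\out^0(H_{\phi})$ is the kernel of a (possibly trivial) map onto $C_2$.
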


This lemma is why in Theorem~\ref{theorem:secondmainthm} we restrict our analysis to $\operatorname{Out}^0(H_{\phi})$.
Note that the automorphisms of the form $\alpha_{\delta}$ are such that the following hold. We use these equalities throughout the remainder of Section~\ref{sec:OutOfMT}.
\begin{align*}
\alpha_{\delta}\alpha_{\xi}&=\alpha_{\delta\xi}\\
\alpha_{\delta}^{-1}&=\alpha_{\delta^{-1}}
\end{align*}


\subsection{The outer automorphism groups of certain mapping tori}
\label{sec:mappingtori}
Take $H_{\phi}=H\rtimes_{\phi}\mathbb{Z}$ to be a mapping torus with base group $H$ and associated automorphism $\phi\in\aut(H)$, and also assume that $H$ has trivial center and has no epimorphisms onto $\mathbb{Z}$. In this section we prove Theorem~\ref{theorem:secondmainthm}, which gives a description of $\out(H_{\phi})$ for such a group $H_{\phi}$. Recall that for $K$ a group and $\psi\in\aut(K)$, $\widehat{\psi}$ denotes the element of $\out(K)$ with representative $\psi$, and that $C_K(g)$ denotes the centraliser of the element $g\in K$.

\begin{theorem}\label{theorem:secondmainthm}
Let $H_{\phi}=H\rtimes_{\phi}\mathbb{Z}$ be a mapping torus with base group $H$ and associated automorphism $\phi$.
Assume $H$ has trivial center and has no epimorphisms onto $\mathbb{Z}$.
Then we have the following isomorphism,
\[
\operatorname{Out}^0(H_{\phi})\cong \frac{C_{\out(H)}(\widehat{\phi})}{\<\widehat{\phi}\>}
\]
where either $\operatorname{Out}^0(H_{\phi})=\operatorname{Out}(H_{\phi})$ or $\widehat{\phi}$ is conjugate to $\widehat{\phi}^{-1}$ in $\out(H)$, whence $\operatorname{Out}^0(H_{\phi})$ has index two in $\operatorname{Out}(H_{\phi})$.
\end{theorem}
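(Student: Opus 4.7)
The second assertion of the theorem is immediate from Lemma~\ref{lem:index1or2}, so the task is to establish the isomorphism. My approach is to define
\[
\Phi: C_{\out(H)}(\widehat{\phi})\longrightarrow \out^0(H_{\phi}),\qquad \widehat{\delta}\longmapsto\widehat{\alpha_{\delta}},
\]
and verify that it is a well-defined surjective homomorphism with kernel $\<\widehat{\phi}\>$. Because $H$ has trivial centre, the map $h\mapsto\gamma_h$ from $H$ to $\inn(H)$ is injective, so for each $\widehat{\delta}\in C_{\out(H)}(\widehat{\phi})$ the element $g\in H$ with $[\delta,\phi]=\gamma_g$ from Lemma~\ref{lem:alphaform} is unique, and thus $\alpha_\delta$ depends only on $\delta$, not on any auxiliary choice.

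To show $\Phi$ descends to outer classes I would verify that replacing $\delta$ by $\gamma_h\delta$ changes $\alpha_\delta$ only by an inner automorphism of $H_\phi$. This is a bookkeeping calculation using the commutation rule $\phi^{-1}\gamma_h=\gamma_{\phi^{-1}(h)}\phi^{-1}$ to compare $[\gamma_h\delta,\phi]$ with $[\delta,\phi]$, combined with the observation that conjugation by $h\in H\leq H_\phi$ restricts on $H$ to $\gamma_h$. The homomorphism property then follows immediately from the identities $\alpha_\delta\alpha_\xi=\alpha_{\delta\xi}$ and $\alpha_\delta^{-1}=\alpha_{\delta^{-1}}$ recorded at the end of Section~\ref{sec:SubgroupWeIsolate}. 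Surjectivity is built into the definition: every element of $\out^0(H_\phi)$ has the form $\widehat{\alpha_\delta}$ for some $\delta\in\aut(H)$, and the very existence of $\alpha_\delta$ in Lemma~\ref{lem:alphaform} forces $[\delta,\phi]\in\inn(H)$, i.e.\ $\widehat\delta\in C_{\out(H)}(\widehat\phi)$.

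For the kernel, one inclusion is easy: taking $\delta=\phi$ we have $[\phi,\phi]=\mathrm{id}$, so $g=1$ and $\alpha_\phi$ acts as $\phi$ on $H$ while fixing $t$, which is precisely conjugation by $t^{-1}$ in $H_\phi$; hence $\<\widehat{\phi}\>\subseteq\ker\Phi$. Conversely, if $\alpha_\delta=\gamma_{ht^n}$ for some $ht^n\in H_\phi$, restricting to $H$ and using the defining relation $tyt^{-1}=\phi(y)$ yields $\delta=\phi^{-n}\gamma_h$ in $\aut(H)$, so $\widehat{\delta}=\widehat{\phi}^{-n}\in\<\widehat{\phi}\>$. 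The main obstacle will be the well-definedness step: the required identity $\alpha_{\gamma_h\delta}\in\inn(H_\phi)\cdot\alpha_\delta$ reduces to a direct computation but requires careful tracking of conventions for inner automorphisms, and the trivial-centre hypothesis on $H$ is used throughout so that the element $g$ (and hence $\alpha_\delta$) is unambiguously determined by $\delta$ and so that the kernel computation inside $H_\phi$ pins down the exponent $n$ uniquely.
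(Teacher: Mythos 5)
Your proposal is correct and follows essentially the same route as the paper: both define the map $\widehat{\delta}\mapsto\widehat{\alpha}_{\delta}$ from $C_{\out(H)}(\widehat{\phi})$ to $\out^0(H_{\phi})$, check surjectivity from the definition of $\out^0$, well-definedness by tracking what happens under $\delta\mapsto\delta\gamma_k$, the homomorphism property from $\alpha_{\delta}\alpha_{\xi}=\alpha_{\delta\xi}$, and compute the kernel by observing that $\alpha_{\phi}$ is conjugation by $t$ and that $\alpha_{\delta}=\gamma_{ht^n}$ forces $\widehat{\delta}\in\<\widehat{\phi}\>$. The only difference is that you flag the well-definedness computation as the delicate step and sketch it rather than writing it out; the paper carries it out explicitly, deriving $[\delta_1,\phi]=\gamma_{kg_2\cdot\phi(k^{-1})}$ from $[\delta_1\gamma_k,\phi]=\gamma_{g_2}$ and checking $\alpha_{\delta_1\gamma_k}=\alpha_{\delta_1}\gamma_k$, but the idea is identical.
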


\begin{proof}
By Lemma~\ref{lem:index1or2}, $\out^0(H_{\phi})$ has index one or two in $\out(G)$, and further has index two precisely when $\widehat{\phi}$ is conjugate to $\widehat{\phi}^{-1}$ in $\out(H)$, as required. We shall now prove that $\out^0(H_{\phi})$ is isomorphic to $C_{\out(H)}(\widehat{\phi})/\<\widehat{\phi}\>$, which completes the proof of the theorem.

Consider the following map. We shall prove that it is a well-defined surjective homomorphism with kernel $\<\widehat{\phi}\>$, which proves the theorem.
\begin{align*}
\eta: C_{\out(H)}(\widehat{\phi}) &\rightarrow \operatorname{Out}^0(H_{\phi})\\
\widehat{\delta} &\mapsto \widehat{\alpha}_{\delta}
\end{align*}
Note that the map $\eta$ is surjective by the definition of $\out^0(H_{\phi})$, and it is a homomorphism because $\alpha_{\delta}\alpha_{\xi}=\alpha_{\delta\xi}$.

To see that $\eta$ is well-defined, suppose that $\delta_2=\delta_1\gamma_k$. Note that $[\delta_1, \phi]=\gamma_{kg_2\cdot\phi(k^{-1})}$ where $g_2$ is such that $[\delta_2, \phi]=\gamma_{g_2}$. Then $\alpha_{\delta_2}(h)=\alpha_{\delta_1}\gamma_k(h)$ for all $h\in H$, while $\alpha_{\delta_2}(t)=g_2t$ and we have the following.
\begin{align*}
\alpha_{\delta_1}\gamma_k(t)
&=k^{-1}kg_2\cdot\phi(k^{-1})\cdot tk\\
&=g_2t
\end{align*}
We thus have that $\alpha_{\delta_2}=\alpha_{\delta_1}\gamma_k$, so $\widehat{\alpha}_{\delta_2}=\widehat{\alpha}_{\delta_1}$ as required.

Finally, to prove that the map $\eta$ has kernel $\<\widehat{\phi}\>$ begin by supposing that $\alpha_{\delta}$ is inner, and so $\alpha_{\delta}=\gamma_{kt^i}$ for some $k\in H$ and $i\in\mathbb{Z}$. This means that $h=t^{-i}k^{-1}\cdot\delta^{-1}(h)\cdot kt^i$ for all $h\in H$, so $\phi^i(h)=\delta^{-1}\gamma_k(h)$ for all $h\in H$, and so $\widehat{\delta}=\widehat{\phi}^j$ in $\out(H)$ for some $j\in\mathbb{Z}$. Therefore, $\ker\eta\leq\<\widehat{\phi}\>$. On the other hand, $\alpha_{\phi}$ is inner because $\alpha_{\phi}(h)=\phi(h)=tht^{-1}$ while $\alpha_{\phi}(t)=t$. Therefore, $\<\widehat{\phi}\>\leq\ker\eta$. Thus, we conclude that $\alpha_{\delta}\in\inn(H_{\phi})$ if and only if $\widehat{\delta}\in\<\widehat{\phi}\>$.
\end{proof}


\section{Proof of Theorem~\ref{thm:intro1}}
\label{sec:bumaginwise}
In this section we apply Theorem~\ref{theorem:secondmainthm} to prove the main results of this paper, Theorems~\ref{thm:intro1}~and~\ref{thm:intro2}.

\p{Sapir's embedding} To apply Theorem~\ref{theorem:secondmainthm} we need to have some knowledge or control over the centralisers of elements in $\out(H)$. To do this, we use an embedding of Sapir \cite[Theorem~5.1]{sapir2014higman}. If $K$ is a finitely generated, recursively presented group and $x\in K$, then Sapir's embedding gives a finitely presented group $P$ containing $K$ as a subgroup and such that $C_K(x)=C_P(x)$. It is an open problem of Osin that every recursively presented group can be embedded as a malnormal subgroup of a finitely presented group \cite{sapir2014higman}. Sapir remarks that in his embedding $K$ is malnormal in $P$, hence the open problem of Osin has a positive solution, and that this will be proven in his next paper. The proofs of Theorems~\ref{thm:intro1}~and~\ref{thm:intro2} both apply Sapir's embedding. Note that Theorem~\ref{thm:intro2} can be rephrased as ``if Osin's problem admits a positive solution then every finitely generated, recursively presentable group can be realised as the outer automorphism group of a finitely generated, residually finite group''.

\p{The Bumagin--Wise question}
We now prove two theorems, which combine to prove Theorem~\ref{thm:intro1} and the second of which yields Theorem~\ref{thm:intro2}. The first theorem, Theorem~\ref{theorem:BWanswer1}, gives a partial answer to Bumagin--Wise's question for certain groups, while the second theorem, Theorem~\ref{theorem:BWanswer2}, gives a complete answer to Bumagin--Wise's question for certain groups.

The proofs of Theorems~\ref{theorem:BWanswer1}~and~\ref{theorem:BWanswer2} both use the fact that a split extension of a finitely generated, residually finite group by a residually finite group is residually finite \cite{mal1956homomorphisms}. Hence if the base group $H$ is a finitely generated, residually finite group then the mapping torus $H_{\phi}=H\rtimes_{\phi}\mathbb{Z}$ is also a finitely generated, residually finite group.

Recall that the group $\out^0(H_{\phi})$, as defined in Section~\ref{sec:SubgroupWeIsolate}, is the subgroup of $\out(H_{\phi})$ consisting of the elements of the form $\widehat{\alpha}_{\delta}$, where ${\alpha}_{\delta}$ is defined in Lemma~\ref{lem:alphaform}.

\begin{theorem}
\label{theorem:BWanswer1}
Let $Q$ be a finitely generated, recursively presented group. Then there exists a finitely generated, residually finite group $G$ such that $\out(G)\cong Q\times C_2$.
\end{theorem}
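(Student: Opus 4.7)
The plan is to apply Theorem~\ref{theorem:secondmainthm} to a carefully chosen mapping torus $G = H_\phi$, arranging that the quotient on the right-hand side is isomorphic to $Q$ and that the index-two extension contributes an extra direct factor of $C_2$.

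First I would encode $Q$ as a centralizer inside a finitely presented group. Set
\[
K = Q \times D_\infty, \qquad D_\infty = \<s, x \mid s^2, sxs^{-1}x\>.
\]
Then $K$ is finitely generated and recursively presented, $C_K(x) = Q \times \<x\>$, and $sxs^{-1} = x^{-1}$. Applying Sapir's embedding with the distinguished element $x$ yields a finitely presented group $P \supseteq K$ with $C_P(x) = C_K(x) = Q \times \<x\>$; in particular $x$ is still conjugate to $x^{-1}$ in $P$ via $s$.

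Next I would apply the Bumagin--Wise construction to the finitely presented group $P$ to obtain a finitely generated, residually finite group $H$ with trivial center, no epimorphism onto $\mathbb{Z}$, and an isomorphism $\out(H) \cong P$. Let $\phi, \delta \in \aut(H)$ be representatives of the classes corresponding to $x$ and $s$ respectively. The mapping torus $G = H_\phi$ is then finitely generated and residually finite (the latter by Mal'cev's theorem). Theorem~\ref{theorem:secondmainthm} yields
\[
\out^0(G) \cong C_{\out(H)}(\widehat{\phi})/\<\widehat{\phi}\> \cong (Q \times \<x\>)/\<x\> \cong Q,
\]
and, since $\widehat{\phi}$ is conjugate to $\widehat{\phi}^{-1}$ via $\widehat{\delta}$, the subgroup $\out^0(G)$ has index two in $\out(G)$.

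It remains to show that the extension $1 \to Q \to \out(G) \to C_2 \to 1$ splits as a direct product. The automorphism $\zeta_\delta \in \aut(G)$ from Lemma~\ref{lem:zetaform} supplies a lift of the nontrivial element of $C_2$. By the identity $\zeta_\delta^2 \gamma_g^{-1} = \alpha_{\delta^2 \gamma_g^{-1}}$ established in the proof of that lemma, together with the fact that $\widehat{\delta}^{\,2} = 1$ in $\out(H) \cong P$ (because $s^2 = 1$), one concludes $\widehat{\zeta_\delta}^{\,2} = 1$ in $\out(G)$. Conjugation by $\widehat{\zeta_\delta}$ acts trivially on $\out^0(G) \cong Q$ because $s$ commutes with every element of $Q$ inside the direct factor $K = Q \times D_\infty$ of $P$. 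Combining these observations yields $\out(G) \cong Q \times C_2$. The principal obstacle is verifying the compatibility between Sapir's centralizer preservation and the Bumagin--Wise isomorphism---that is, that $C_{\out(H)}(\widehat{\phi})$ matches $C_P(x)$ precisely under $\out(H)\cong P$---while simultaneously ensuring $H$ satisfies all the hypotheses of Theorem~\ref{theorem:secondmainthm}.
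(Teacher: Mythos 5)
Your proposal is correct and applies the same toolkit as the paper (Sapir's centralizer-preserving embedding, the Bumagin--Wise construction, and Theorem~\ref{theorem:secondmainthm}), but via a genuinely different encoding of $Q$ inside the finitely presented group $P$. The paper sets $K = Q \times C_2$ and lets the distinguished element $k$ be the order-two generator; then $\widehat{\phi}$ has order two in $\out(H)$, $\<\widehat{\phi}\> \cong C_2$, $\out^0(G) \cong (Q \times C_2)/C_2 \cong Q$, and the index-two coset is represented by a $\zeta$-automorphism with $\delta = \mathrm{id}$ (available precisely because $\widehat{\phi} = \widehat{\phi}^{-1}$). You instead set $K = Q \times D_\infty$ with distinguished element $x$ of infinite order, getting $\out^0(G) \cong (Q \times \mathbb{Z})/\mathbb{Z} \cong Q$, and use the reflection $s$ as the conjugator $\delta$ in $\zeta_\delta$. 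Your verification that $\widehat{\zeta_\delta}^{\,2} = 1$ (via $\zeta_\delta^2\gamma_g^{-1} = \alpha_{\delta^2\gamma_g^{-1}}$ and $\widehat{\delta}^{\,2} = 1$) and that conjugation by $\widehat{\zeta_\delta}$ acts trivially on $\out^0(G)$ is sound; the trivial action follows because $\zeta_\delta\alpha_\xi\zeta_\delta^{-1} = \alpha_{\delta\xi\delta^{-1}}$, and $s$ commutes with $Q$ while inverting $x$, so $\widehat{\delta}\widehat{\xi}\widehat{\delta}^{-1}$ differs from $\widehat{\xi}$ only by an element of $\<\widehat{\phi}\> = \ker\eta$. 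One small wording slip: $K$ is a (malnormal) subgroup of $P$ under Sapir's embedding, not a \emph{direct factor}; what you actually need, and do have, is only that $s$ commutes with $Q$ inside $K \leq P$. The paper's route is leaner since it only needs $C_2$ rather than $D_\infty$; yours makes the conjugating involution $\delta$ explicit and avoids any reliance on $\phi$ being (close to) self-inverse, which arguably makes the splitting argument more transparent.
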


\begin{proof}
Define $Q_2=Q\times C_2$. As $Q$ is finitely generated and recursively presented, we can use Sapir's embedding to construct a finitely presented group $P$ which contains $Q_2$ and such that $C_P(k)=Q_2$ where $k$ is the generator of the $C_2$ factor of $Q_2$. As $P$ is finitely presented, there exists a finitely generated, residually finite group $H$ such that $\out(H)\cong P$ \cite{BumaginWise2005}. Note that this group $H$ is generated by elements of finite order, and so does not map onto $\mathbb{Z}$, and also note that $H$ is a non-cyclic subgroup of a finitely presented $C^{\prime}(1/6)$ group and therefore has trivial center \cite{BumaginWise2005}. Thus, Theorem~\ref{theorem:secondmainthm} is applicable to $H_{\phi}=H\rtimes_{\phi}\mathbb{Z}$ for all $\phi\in\aut(H)$.

Let $\widehat{\phi}$ be the element of $\out(H)$ associated to $k\in Q_2$. Thus, $C_{\out(H)}(\widehat{\phi})\cong Q_2$. Form $H_{\phi}=H\rtimes_{\phi}\mathbb{Z}$ for some $\phi\in\widehat{\phi}$. Then $\out^0(H_{\phi})\cong Q$ by Theorem~\ref{theorem:secondmainthm}. Note that $H_{\phi}$ is finitely generated, and residually finite \cite{mal1956homomorphisms}.

To complete the theorem it is sufficient to prove that $\out(G)=\out^0(H_{\phi})\times C_2$. To see this, note that $k=k^{-1}$. Thus, the automorphism $\psi: h\mapsto h, t\mapsto t^{-1}$ can be taken as the coset representative for $\out(H_{\phi})/\out^0(H_{\phi})$. This automorphism has order two and generates a normal subgroup of $\out(H_{\phi})$. Therefore, taking $G=H_{\phi}$, $\out(G)=\out^0(G)\times\langle\widehat{\psi}\rangle\cong Q\times C_2$, as required.
\end{proof}

The following theorem, Theorem~\ref{theorem:BWanswer2}, allows us to apply a positive solution of Osin's problem to get a positive solution to Bumagin--Wise's question for finitely generated, recursively presented groups. This is because if $Q$ is finitely generated and recursively presented then the conditions of Theorem~\ref{theorem:BWanswer2} hold if, for example, $Q\times C_3$ embeds malnormally into a finitely presented group, and a positive solution to Osin's question gives us this embedding.

\begin{theorem}
\label{theorem:BWanswer2}
Let $Q^{\prime}=Q\times C$ where $C=\<k\>$ is cyclic of order greater than two (possibly infinite). Suppose that $Q^{\prime}$ can be embedded into a
finitely presented group $P$ where $k$ is not conjugate to $k^{-1}$ in $P$. Then there exists a finitely generated, residually finite group $G$ such that $\out(G)\cong Q$.
\end{theorem}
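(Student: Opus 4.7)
The strategy is to follow the proof of Theorem~\ref{theorem:BWanswer1} almost verbatim; the one extra ingredient is that the hypothesis will be used to arrange that $\widehat{\phi}$ is not conjugate to $\widehat{\phi}^{-1}$ in $\out(H)$. By Lemma~\ref{lem:index1or2} this forces $\operatorname{Out}^0(H_\phi)=\operatorname{Out}(H_\phi)$, so no spurious $C_2$ factor appears and we recover $Q$ exactly.

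The first task is to construct a finitely presented group $P^*$ containing $Q'$ such that $C_{P^*}(k)=Q'$ and $k\not\sim k^{-1}$ in $P^*$. Sapir's embedding applied directly to $Q'$ (which is finitely generated and recursively presented, since $Q$ is) with distinguished element $k$ produces a finitely presented group whose centralizer of $k$ is exactly $Q'$, but it offers no obvious control over the conjugacy of $k$ and $k^{-1}$. The hypothesized embedding $Q'\hookrightarrow P$ produces a finitely presented group in which $k$ and $k^{-1}$ are non-conjugate, but whose centralizer of $k$ need not equal $Q'$. The plan is to merge the two: apply Sapir's embedding to the finitely presented (hence recursively presented) group $P$ with distinguished element $k$, yielding a finitely presented $P^{**}\supseteq P$ with $C_{P^{**}}(k)=C_P(k)$, and then further refine to cut the centralizer down to $Q'$. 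The non-conjugacy of $k$ and $k^{-1}$, which holds in $P$ by hypothesis, is inherited by $P^*$ since Sapir's embedding only adjoins elements that centralize $k$ inside the original $P$.

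With $P^*$ in hand, the Bumagin--Wise construction \cite{BumaginWise2005} produces a finitely generated, residually finite group $H$ with $\out(H)\cong P^*$. As in the proof of Theorem~\ref{theorem:BWanswer1}, $H$ is generated by torsion elements, hence has no epimorphism onto $\mathbb{Z}$, and embeds as a non-cyclic subgroup of a finitely presented $C'(1/6)$ group, hence has trivial centre, so Theorem~\ref{theorem:secondmainthm} applies to every mapping torus of $H$. Choose $\phi\in\aut(H)$ representing the outer class $\widehat{\phi}\in\out(H)\cong P^*$ corresponding to $k$, and set $G=H_\phi=H\rtimes_\phi\mathbb{Z}$. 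Then $G$ is finitely generated and, by \cite{mal1956homomorphisms}, residually finite. Theorem~\ref{theorem:secondmainthm} gives
\[
\operatorname{Out}^0(G)\;\cong\;\frac{C_{\out(H)}(\widehat{\phi})}{\<\widehat{\phi}\>}\;\cong\;\frac{Q'}{\<k\>}\;\cong\;Q,
\]
and the non-conjugacy of $k$ and $k^{-1}$ in $P^*\cong\out(H)$, together with Lemma~\ref{lem:index1or2}, upgrades this to $\operatorname{Out}(G)=\operatorname{Out}^0(G)\cong Q$, as required.

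The technical heart of the argument is the first step: reconciling Sapir's centralizer-preserving embedding with the non-conjugacy embedding provided by the hypothesis into a single finitely presented group $P^*$. Sapir's embedding alone does not plainly rule out new conjugators for $k$ and $k^{-1}$, and the hypothesis alone does not control the centralizer, so an explicit combined construction — using that the hypothesis group $P$ is itself finitely presented and so amenable to a further application of Sapir's embedding — is needed. Once this is carried out, the rest of the argument follows the blueprint of Theorem~\ref{theorem:BWanswer1}, with the non-conjugacy ensuring that the $C_2$ ambiguity present there is absent here.
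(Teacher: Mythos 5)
Your second half follows the paper's own route exactly (Bumagin--Wise group $H$ with $\out(H)$ the given finitely presented group, the mapping torus over the class of $k$, Theorem~\ref{theorem:secondmainthm}, and Lemma~\ref{lem:index1or2} to rule out the index-two extension). The divergence, and the genuine gap, is your first step: the construction of a finitely presented $P^*$ containing $Q'$ with both $C_{P^*}(k)=Q'$ and $k$ not conjugate to $k^{-1}$. You never produce it. Applying Sapir's embedding to $P$ only yields $C_{P^{**}}(k)=C_P(k)$, which may be far larger than $Q'$, and the promised ``further refine to cut the centralizer down to $Q'$'' is precisely the hard part --- no tool in the paper shrinks a centralizer inside a finitely presented group to a prescribed subgroup, and you give no construction. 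Worse, your claim that non-conjugacy of $k$ and $k^{-1}$ is inherited because Sapir's embedding ``only adjoins elements that centralize $k$ inside the original $P$'' is a non sequitur: Sapir's theorem controls only the centralizer of $k$, and an element conjugating $k$ to $k^{-1}$ does not centralize $k$ (since $k$ has order greater than two), so nothing in the cited result prevents new such conjugators from appearing. Thus the step you yourself flag as the technical heart is unproved, and the one argument offered for part of it is invalid.

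The paper avoids this entirely: it uses the hypothesized $P$ itself, with no Sapir embedding in this proof, and identifies $C_{\out(H)}(\widehat{\phi})$ with $Q'$; the non-conjugacy hypothesis plus Lemma~\ref{lem:index1or2} then gives $\out(G)=\out^0(G)\cong Q'/\<k\>\cong Q$. You are right to notice that the theorem's literal statement records only the non-conjugacy of $k$ and $k^{-1}$ and not the centralizer identification $C_P(k)=Q'$ that the argument needs; but the intended reading (and the way the theorem is invoked in the proof of Theorem~\ref{thm:intro2}) is that $Q'$ sits malnormally in $P$, in which case any $p\in P$ centralizing $k$ satisfies $pQ'p^{-1}\cap Q'\ni k\neq 1$, hence $p\in Q'$, so $C_P(k)=C_{Q'}(k)=Q'$ comes for free (and the same malnormality argument gives $k\not\sim k^{-1}$). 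The correct repair is therefore to use or add that centralizer hypothesis, not to attempt a merged embedding whose key properties you cannot verify; as written, your proposal does not prove the theorem.
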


\begin{proof}
Write $H$ for the finitely generated, residually finite group such that $\out(H)\cong P$ \cite{BumaginWise2005}, and, as in the proof of Theorem~\ref{theorem:BWanswer1}, take $\widehat{\phi}$ to be the element of $\out(H)$ associated to $k\in Q^{\prime}$ and form the finitely generated, residually finite group $G\cong H\rtimes_{\phi}\mathbb{Z}$ such that $\out^0(G)\cong Q^{\prime}$. Finally, because $k$ is not conjugate to $k^{-1}$ in $P$, Lemma~\ref{lem:index1or2} allows us to conclude that $\out(G)=\out^0(G)\cong Q$, as required.
\end{proof}

We shall now prove Theorems~\ref{thm:intro1}~and~\ref{thm:intro2}.
 
\begin{proof}[Proof of Theorem~\ref{thm:intro1}]
Theorem~\ref{thm:intro1} follows immediately from combining Theorems~\ref{theorem:BWanswer1}~and~\ref{theorem:BWanswer2}.
\end{proof}
 
Note that if Osin's open problem has a positive solution, so every finitely generated, recursively presented group is a malnormal subgroup of a finitely presented group, then we can use Theorem~\ref{theorem:BWanswer2} and disregard Theorem~\ref{theorem:BWanswer1} to obtain Theorem~\ref{thm:intro2}.

\begin{proof}[Proof of Theorem~\ref{thm:intro2}]
If $Q$ is a finitely generated, recursively presentable group then, by the assumptions of the theorem, $Q\times C_3$ embeds malnormally into a finitely presented group $P$. Theorem~\ref{thm:intro2} then follows from Theorem~\ref{theorem:BWanswer2}.
\end{proof}

\p{Recursive presentability}
It is natural to ask how far Theorem~\ref{thm:intro2} goes towards solving Bumagin--Wise's question, assuming that Osin's problem has a positive solution and that the groups $Q$ in the statement of the question are additionally assumed to be finitely generated. The ``best possible'' case would be that every finitely generated group which occurrs as the outer automorphism group of a finitely generated, residually finite group is recursively presentable, and so Theorem~\ref{thm:intro2} would be the complete solution to Bumagin--Wise's question for finitely generated groups. However, the following proposition, Proposition~\ref{prop:fgrf}, implies that this case does not happen. We then prove, in Proposition~\ref{corol:RESTRICTEDfullsolution}, that if the groups $G_Q$ in the statement of Bumagin--Wise's question are additionally assumed to be recursively presentable then Theorem~\ref{thm:intro2} is the complete solution to Bumagin--Wise's question for finitely generated groups.
\begin{proposition}
\label{prop:fgrf}
There exists a finitely generated, non-recursively presentable group $Q$ which can be realised as the outer automorphism group of a finitely generated, residually finite group $G_Q$.
\end{proposition}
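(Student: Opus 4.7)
The plan rests on the cardinality observation noted in the acknowledgements: there are $2^{\aleph_0}$ pairwise non-isomorphic finitely generated residually finite groups (for instance by varying the relators in a small-cancellation-like family of $2$-generator residually finite groups), whereas only $\aleph_0$ finitely generated groups admit a recursively enumerable set of defining relations. Consequently there exist finitely generated, residually finite groups $Q$ which are not recursively presentable, and I would fix such a $Q$ once and for all. My goal is to realise $Q$ (or $Q\times C_2$, which is also finitely generated and not recursively presentable, since recursive presentability is inherited by finitely generated direct factors) as $\out(G_Q)$ for a finitely generated, residually finite group $G_Q$.

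The strategy parallels that of Theorem~\ref{theorem:BWanswer1}, but Sapir's embedding is unavailable: by Higman's embedding theorem, no finitely generated non-recursively presentable group can be embedded in any finitely presented group, so there is no finitely presented $P$ containing $Q$. Instead I would exploit the residual finiteness of $Q$ itself. The raw Bumagin-Wise construction, which does not require recursive presentability in order to produce a finitely generated group $G_Q$ with $\out(G_Q)\cong Q$, gives a short exact sequence $1\to N\to G_Q\to Q\to 1$ with $N$ a finitely generated normal subgroup arising from a Rips-type small-cancellation construction, and with $\out(G_Q)\cong Q$. Both $N$ and $Q$ are residually finite; the task is to show that $G_Q$ is too.

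The main obstacle is precisely this last step: in the finitely presented case Bumagin-Wise deduce residual finiteness of $G_Q$ by viewing $G_Q$ as a subgroup of an ambient finitely presented residually finite extension, which route is now closed. I would attempt to replace that argument by Wise's residually finite Rips construction, combined with the observation that one can separate elements of $N$ using $Q$-invariant finite-index subgroups of $N$: since $N$ is finitely generated and residually finite, and the action of $Q$ on $N$ has finitely generated image in $\out(N)$, one obtains a cofinal family of such subgroups, producing enough finite quotients of $G_Q$ to separate all elements. A back-up route, should this fail, is to apply the mapping-torus machinery of Section~\ref{sec:OutOfMT} with a carefully chosen $\phi$, engineering $\out(H)$ for a centreless finitely generated residually finite $H$ so as to contain $Q$ as a finitely generated subgroup, and then using Theorem~\ref{theorem:secondmainthm} to transport the non-recursive presentability from $Q$ up into $\out^0(H_\phi)$.
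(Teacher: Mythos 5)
Your identification of the key obstacle is exactly right: Higman's embedding theorem rules out any finitely presented overgroup of a non-recursively-presentable $Q$, so Sapir's embedding and the $C_P(k)$ centraliser control that drive Theorem~\ref{theorem:BWanswer1} are unavailable. However, the remedy you propose does not close the resulting gap. Proving residual finiteness of the Bumagin--Wise group $G_Q$ (which is the \emph{kernel} of the Rips-type extension $1\to G_Q\to\Gamma\to Q\to 1$, not the middle term, so your notation is slightly off) hinges on producing a residually finite ambient group $\Gamma$, and Wise's residually finite version of the Rips construction genuinely requires $Q$ to be finitely presented in order to build $\Gamma$ as a residually finite small-cancellation (or special) group. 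When $Q$ is not finitely presented, $\Gamma$ is not finitely presented, and the machinery that certifies its residual finiteness no longer applies. Your fallback claim --- that one can ``separate elements of $N$ using $Q$-invariant finite-index subgroups of $N$'' because $N$ and $Q$ are residually finite and the outer action is finitely generated --- is not a theorem: extensions of residually finite groups by residually finite groups need not be residually finite, and the existence of a cofinal family of characteristic (or $Q$-invariant) finite-index subgroups of $N$ is precisely what fails in general. Your second fallback, the mapping-torus route, runs into the same wall: to get a centreless, finitely generated, residually finite $H$ with $\out(H)$ containing $Q$ via Theorem~\ref{theorem:BWanswer1} one still needs a finitely presented group containing $Q$, which Higman forbids.

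The paper avoids this entirely by taking a different $Q$. It starts from a finitely generated, residually finite, non-recursively-presentable group $R$ (existence via Slobodsko\v\i{} / Bridson--Wilton, rather than your cardinality count, whose ``small-cancellation-like family'' of residually finite $2$-generator groups would itself need justification), and then cites a separate construction from the author's thesis producing a finitely generated, residually finite group $G_{\widehat{R}}$ whose outer automorphism group is finitely generated and contains $R$ with finite index. Since recursive presentability passes to finite-index subgroups of finitely generated groups, $\out(G_{\widehat{R}})$ is not recursively presentable, and the proposition is proved with $Q=\out(G_{\widehat{R}})$, not with $R$ itself. In other words, the paper does not attempt to hit a pre-chosen residually finite non-r.p.\ target; it engineers $G_{\widehat{R}}$ first and takes whatever $\out(G_{\widehat{R}})$ turns out to be. That freedom is exactly what your proposal gives up, and it is what makes the residual-finiteness problem tractable there while it remains open along your route.
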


We now explain the proof of Proposition~\ref{prop:fgrf}. Note that there exists a finitely generated, residually finite group $R$ which is not recursively presentable (Bridson--Wilton \cite{bridson2013triviality} point out that this follows from work of Slobodsko\v\i \cite{Slobodskoi1981undecidability}). Using the existence of such a non-recursively presented group $R$, a forthcoming paper of the author (see also the author's PhD thesis \cite[Corollary 4.3.16]{logan2014outer}) constructs a finitely generated, residually finite group $G_{\widehat{R}}$ whose outer automorphism group is finitely generated but not recursively presentable (indeed, $R$ is embedded with finite index into $\out(G_{\widehat{R}})$). This proves Proposition~\ref{prop:fgrf}. Note, however, this group $G_{\widehat{R}}$ is itself not recursively presentable.

We now provide a positive answer to the following question: assuming Osin's problem has a positive solution, is it true that a finitely generated group $Q$ can be realised as the outer automorphism group of a recursively presented, finitely generated, residually finite group $G_Q$ if and only if $Q$ is recursively presentable? That is, is Theorem~\ref{thm:intro2} is the complete solution to this restricted version of Bumagin--Wise's question? We provide a positive answer by combining Theorem~\ref{thm:intro2} with following proposition, Proposition~\ref{prop:Minasyan}, which is due to Ashot Minasyan in a private communication with the author. Proposition~\ref{prop:Minasyan} also explains why the group $G_{\widehat{R}}$ in the author's construction, cited above, is not recursively presentable. We state the proposition, give a sketch proof of it, and then combine it with Theorem~\ref{thm:intro2} to prove Proposition~\ref{corol:RESTRICTEDfullsolution}.

\begin{proposition}[A. Minasyan]
\label{prop:Minasyan}
Suppose that $G$ is a finitely generated, recursively presentable group whose outer automorphism group $\out(G)$ is also finitely generated. Then $\aut(G)$ and $\out(G)$ are recursively presentable.
\end{proposition}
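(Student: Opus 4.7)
The plan is to reduce everything to the fact that, since $G$ is recursively presented on a finite generating set, its word problem is recursively enumerable, and then to show that the word problem of $\aut(G)$ reduces effectively to that of $G$. Once $\aut(G)$ is shown to be recursively presentable, recursive presentability of $\out(G)=\aut(G)/\inn(G)$ will follow because $\inn(G)$ is a finitely generated normal subgroup of $\aut(G)$: if $x_1,\ldots,x_n$ generate $G$ then $\gamma_{x_1},\ldots,\gamma_{x_n}$ generate $\inn(G)$, and, $\inn(G)$ being normal in $\aut(G)$, this is also a normal generating set. Choosing words $w_1,\ldots,w_n$ in generators of $\aut(G)$ representing $\gamma_{x_1},\ldots,\gamma_{x_n}$ (such words exist, even if not explicitly known) and appending them to a recursive presentation of $\aut(G)$ then yields a recursive presentation of $\out(G)$.

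First I would establish that $\aut(G)$ is finitely generated: since $\inn(G)\cong G/Z(G)$ is a quotient of the finitely generated group $G$, it is itself finitely generated, and combining this with the hypothesis that $\out(G)$ is finitely generated shows that the extension $\aut(G)$ is finitely generated. Fix a finite generating set $\phi_1,\ldots,\phi_m$ of $\aut(G)$. Each of $\phi_j$ and $\phi_j^{-1}$ is specified by the finite data consisting of its action on the generators of $G$, namely an $n$-tuple of words in the $x_i$'s.

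Next I would argue that the set of relators of $\aut(G)$ on the generating set $\phi_1,\ldots,\phi_m$ is recursively enumerable. Given any word $W=\phi_{j_1}^{\epsilon_1}\cdots\phi_{j_k}^{\epsilon_k}$, one can effectively compute, by iterated substitution using the fixed data for the $\phi_j^{\pm 1}$, the $n$ words $W(x_1),\ldots,W(x_n)$ in the generators of $G$. Now $W$ represents the identity in $\aut(G)$ if and only if $W(x_i)=_G x_i$ for every $i$, equivalently if and only if each word $W(x_i)x_i^{-1}$ is trivial in $G$. Because $G$ is recursively presented, its word problem is recursively enumerable, so the set of such $W$ is recursively enumerable. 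This gives a recursive presentation of $\aut(G)$, from which the reduction above yields one for $\out(G)$.

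The main step requiring care is simply the observation that the word problem of $\aut(G)$ reduces uniformly to that of $G$ once one has fixed finite data describing the generators $\phi_j^{\pm 1}$; there is no serious obstacle beyond this. A minor conceptual subtlety is that the generating set $\phi_1,\ldots,\phi_m$ together with its description is posited abstractly rather than computed from $G$ and $\out(G)$, but since recursive presentability only demands the existence of some recursive presentation on some finite generating set, this causes no difficulty.
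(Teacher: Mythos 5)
Your proof is correct and follows essentially the same route as the paper's: fix finite data describing the action of generators of $\aut(G)$ on generators of $G$, reduce triviality in $\aut(G)$ to the recursively enumerable word problem of $G$, and enumerate relators. The only difference is that you spell out the passage from $\aut(G)$ to $\out(G)$ (adjoining words representing the $\gamma_{x_i}$, which normally generate $\inn(G)$), a step the paper merely asserts; your elaboration is accurate.
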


\begin{proof}
Assume that $G$ and $\out(G)$ are finitely generated, and that $G$ is recursively presented. Note that this implies that $\aut(G)$ is finitely generated, as it is an extension of $\inn(G)$ by $\out(G)$, and both $\inn(G)\cong G/Z(G)$ and $\out(G)$ are assumed to be finitely generated. We shall just prove that $\aut(G)$ is recursively presentable; this implies that $\out(G)$ is recursively presentable because $G$ and $\out(G)$ are both assumed to be finitely generated.

To prove that $\aut(G)$ is recursively presentable, we shall start with the generators of $\aut(G)$ and construct an algorithm which lists all the relators of $\aut(G)$. Let $\psi_1, \ldots, \psi_m$ be a generating set for $\aut(G)$ and let $x_1, \ldots, x_n$ be a generating set for $G$. We can assume that for each $i,j$ we know words $u_{ij}$ and $v_{ij}$, over the alphabet $\{x_1,\ldots,x_n\}^{\pm 1}$, such that $\psi_i(x_j)=u_{ij}$ and $\psi_i^{-1}(x_j)=v_{ij}$, because this is a finite collection of words and we are only proving the existence of an algorithm. Now, since $G$ is recursively presented, there is a (partial) algorithm $A$ which takes on input a pair of words, $(w_1, w_2)$  say, over $\{x_1,\ldots,x_n\}^{\pm 1}$ and stops, outputting ``yes'' if and only if $w_1=w_2$ in $G$ (this is the algorithm $A$ which re-writes $w_1$ in all possible ways and compares the result with $w_2$).

To obtain an algorithm listing all defining relators of $\aut(G)$, start enumerating all words $\Psi_1,\Psi_2,\ldots$ over $\{\psi_1,\ldots,\psi_m\}^{\pm 1}$. At the same time, for every $k$ check if $\Psi_k(x_j)=x_j$ in $G$ for all $j=1,\ldots,n$ (by writing $\Psi_k(x_j)$ as a word $W$ over the generators $\{x_1,\ldots,x_n\}^{\pm 1}$ in terms of $u_{ij}$ and $v_{ij}$, and then inputting the pair $(W, x_j)$ into $A$). If $\Psi_k=1$ in $\aut(G)$ then we will verify this in finite time, and so we can add $\Psi_k$ to the list of defining relators of $\aut(G)$. Thus we have an algorithm listing all relators in $\aut(G)$, and conclude that $\aut(G)$ is recursively presented.
\end{proof}

Proposition~\ref{prop:Minasyan} and Theorem~\ref{thm:intro2} can be combined to yield the following result. Note that combining Proposition~\ref{prop:Minasyan} and Theorem~\ref{thm:intro1} yields a similar result which is independent of Osin's problem.

\begin{proposition}
\label{corol:RESTRICTEDfullsolution}
Suppose every finitely generated, recursively presented group $Q$ can be embedded as a malnormal subgroup of a finitely presented group $H_Q$. Then a finitely generated group $Q$ can be realised as the outer automorphism group of a recursively presented, finitely generated, residually finite group $G_Q$ if and only if $Q$ is recursively presentable.
\end{proposition}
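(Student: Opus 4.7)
The plan is to prove the two implications separately, with each one being essentially a packaging of a result already established in the paper.

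For the "only if" direction, assume $Q$ is a finitely generated group realised as $\operatorname{Out}(G_Q)$ for some finitely generated, recursively presented, residually finite $G_Q$. Residual finiteness is irrelevant here; what matters is that $G_Q$ is finitely generated and recursively presented, and that $\operatorname{Out}(G_Q) = Q$ is finitely generated. These are exactly the hypotheses of Proposition~\ref{prop:Minasyan}, which immediately yields that $\operatorname{Out}(G_Q) \cong Q$ is recursively presentable. So this direction is a one-line appeal to Proposition~\ref{prop:Minasyan}.

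For the "if" direction, assume Osin's problem has a positive solution and that $Q$ is a finitely generated, recursively presentable group. By Theorem~\ref{thm:intro2} there is a finitely generated, residually finite group $G_Q$ with $\operatorname{Out}(G_Q) \cong Q$. The remaining task is to check that the $G_Q$ produced by the proof of Theorem~\ref{thm:intro2} is actually recursively presented. Unwinding the construction via Theorem~\ref{theorem:BWanswer2}: starting from a finitely presented group $P$ into which $Q \times C_3$ embeds malnormally, the Bumagin--Wise construction of \cite{BumaginWise2005} produces a \emph{finitely presented} residually finite group $H$ with $\operatorname{Out}(H) \cong P$, and then $G_Q = H \rtimes_\phi \mathbb{Z}$ for a specific automorphism $\phi \in \operatorname{Aut}(H)$ representing the image in $\operatorname{Out}(H) \cong P$ of the generator $k$ of the $C_3$ factor. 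Since $\phi$ is determined by its action on the finitely many generators of $H$, the mapping torus $G_Q$ admits a finite presentation obtained by adjoining a single generator $t$ and finitely many relations of the form $t h_i t^{-1} = \phi(h_i)$ to any finite presentation of $H$. Hence $G_Q$ is finitely presented, and in particular recursively presented, completing the "if" direction.

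Neither direction presents a genuine obstacle: the proposition is essentially the observation that Proposition~\ref{prop:Minasyan} supplies the converse to Theorem~\ref{thm:intro2} within the class of recursively presented groups $G_Q$. The only point requiring any care is verifying that the explicit $G_Q$ constructed in the proof of Theorem~\ref{thm:intro2} lies in the restricted class, i.e.\ is recursively presented; this is immediate once one notes that the Bumagin--Wise construction applied to a finitely presented input returns a finitely presented output, and that forming a mapping torus by a computable automorphism of a finitely presented group preserves finite presentability.
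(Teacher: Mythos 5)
Your overall structure matches the paper's (implicit) proof: the paper simply states that Proposition~\ref{prop:Minasyan} and Theorem~\ref{thm:intro2} combine to give the result, and you have correctly identified that the ``only if'' direction is Proposition~\ref{prop:Minasyan} applied verbatim, while the ``if'' direction requires checking that the $G_Q$ constructed in Theorem~\ref{thm:intro2} is recursively presented. Filling in that detail is the right instinct, and the paper does not do it explicitly.

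However, your justification for the ``if'' direction contains an error: you assert that the Bumagin--Wise construction applied to a finitely presented $P$ produces a \emph{finitely presented} group $H$ with $\out(H)\cong P$. That is not what \cite{BumaginWise2005} gives, and the paper is careful to say only that $H$ is \emph{finitely generated} and residually finite. In the Rips-type construction underlying Bumagin--Wise, $H$ arises as a finitely generated normal subgroup of a finitely presented $C'(1/6)$ group, and such subgroups are typically infinitely presented. Fortunately your conclusion survives with a weaker input: a finitely generated subgroup of a finitely presented (indeed, of any recursively presented) group is recursively presented, since the set of words in the subgroup generators that vanish in the ambient group is recursively enumerable. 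Hence $H$ is recursively presented, and the mapping torus $G_Q = H\rtimes_\phi\mathbb{Z}$ is then recursively presented as well, because one obtains a presentation of it from a recursive presentation of $H$ by adjoining one generator $t$ and finitely many relators $t x_i t^{-1} = w_i$, where $w_i$ is a fixed word representing $\phi(x_i)$. So the proposition holds, but you should replace ``finitely presented'' by ``recursively presented'' throughout that step and invoke the subgroup fact rather than finite presentability of $H$.
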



\bibliographystyle{amsalpha}
\bibliography{BibTexBibliography}
\end{document}